\theoremstyle{plain}
\newtheorem{theorem}{Theorem}
\newtheorem{lemma}[theorem]{Lemma}
\theoremstyle{definition}
\theoremstyle{definition}
\theoremstyle{remark}
\renewcommand{\d}{\mathrm{d}}
\newcommand{\eps}{\epsilon}
\newcommand{\bbm}{\begin{bmatrix}}
\newcommand{\ebm}{\end{bmatrix}}
\newcommand{\RR}{\mathbb{R}}
\newcommand{\ZZ}{\mathbb{Z}}
\newcommand{\TT}{\mathbb{T}}
\renewcommand{\i}{\mathrm{i}}
\newcommand{\rd}{\,\mathrm{d}}
\newcommand{\half}{\frac{1}{2}}
\newcommand{\mo}[1]{{O}\left(#1\right)}
\newcommand{\la}{\lambda}
\DeclareMathOperator{\dist}{dist}
\newcommand{\Is}{X_*}
\newcommand{\Ie}{X_e}
\newcommand{\res}{\text{res}}
\begin{document}

\title[A note on spike localization for line spectrum estimation]{A note on spike localization for line spectrum estimation}

\author[]{Haoya Li} \address[Haoya Li]{Stanford University,
  Stanford, CA 94305} \email{lihaoya@stanford.edu}

\author[]{Hongkang Ni} \address[Hongkang Ni]{Stanford University,
  Stanford, CA 94305} \email{hongkang@stanford.edu}

\author[]{Lexing Ying} \address[Lexing Ying]{Stanford University,
  Stanford, CA 94305} \email{lexing@stanford.edu}

\thanks{The authors are listed in alphabetical order. The authors thank Wenjing Liao for the helpful discussions.}

\keywords{Line spectrum estimation, quantum phase estimation.}
\subjclass[2020]{94A08, 94A12, 81P60}

\begin{abstract}
  This note considers the problem of approximating the locations of dominant spikes for a probability measure from noisy spectrum measurements under the condition of residue signal, significant noise level, and no minimum spectrum separation. We show that the simple procedure of thresholding the smoothed inverse Fourier transform allows for approximating the spike locations rather accurately.
\end{abstract}

\maketitle

\section{Introduction}\label{sec:intro}

In this note, we consider the problem of recovering the support of dominant spikes in an unknown probability measure $f_*$ on $\RR$. Let $f_*$ be a probability measure of the form
\begin{equation}\label{eq:fs}
  f_*(x) = \sum_{s=1}^S w_s \delta_{x_s}(x) + r(x)
\end{equation}
where the following conditions hold:
\begin{itemize}
\item $S$ is the total number of dominant spikes, $x_s$ are the spike locations, and all weights $w_s$ are bounded by $\beta>0$ from below, i.e., $w_s\ge \beta>0,~1\le s\le S$;
\item $r$ is a residue where its total variation is upper bounded by a constant $\omega$ less than $\beta$, i.e., $r(\RR)=\int_\RR \d r(x)\le\omega<\beta$.
\end{itemize}
The observed data is a signal $y(k)$ for $k\in [-K,K]$ that satisfies $|y(k) - \hat{f}_*(k)|\le \eps$, where $\hat{f}_*(k)=\int_\RR e^{-2\pi \i x k} f_*(x) \d x$ is the Fourier transform of $f_*$. The goal is to recover the support $\Is\equiv \{x_s\}$ of the spikes with a certain accuracy. We are particularly interested in the following question: for what $\eps$ can we estimate the spike locations $x_s$ within precision $\mo{1/K}$? In this note, we show that when $\eps =\mo{\beta-\omega}$, a simple routine estimates the spike locations within distance $\mo{K^{-1}\log(1/(\beta-\omega))}$.

In practice, it is more relevant to consider a periodic version of the problem, in which case $f_*$ is a probability measure on the periodic interval $\TT\equiv[-\frac{1}{2},\frac{1}{2}]$ that satisfies the conditions listed above, except that the residue $r$ satisfies $r(\TT)\le\omega<\beta$ instead. The observed data is a vector $(y(k))_{k=-K}^K$ such that $|y(k) - \hat{f}_*(k)|\le \eps$ for $k\in \{-K,\ldots,K\}$, where $\hat{f}_*=\int_\TT e^{-2\pi \i x k} f_*(x) \d x$ is the Fourier series coefficients of $f_*$. We show that a similar result can be obtained: when $\eps =\mo{\beta-\omega}$ we can estimate the spike locations within distance $\mo{\log(1/(\beta-\omega))/K}$ as long as $K\ge\Theta(\log(1/(\beta-\omega)))$.

This periodic setting is of more practical importance in signal processing. For example, estimating the low-lying eigenvalues $\{\la_s\}_{s=1}^S$ of a Hamiltonian $H$ is an important problem in quantum computing \cites{o2019quantum,somma2019quantum}, where one applies $e^{\i H t}$ to a $\ket{\psi}=\sum_{s=1}^{S}c_s\ket{\psi_s}+c_\res\ket{\psi_\res}$ that is an imperfect superposition of eigenstates $\ket{\psi_s}$, and the residue $c_\res\ket{\psi_\res}$ is orthogonal to all $\ket{\psi_s}$'s. The data obtained is the noisy Fourier transform at frequency $k$ of  $f_*(x) \equiv \sum_{s=1}^S|c_s|^2\delta_{\la_s}(x) + r(x)$, where $r(x)$ corresponds to the residue $c_\res\ket{\psi_\res}$ in $\ket{\psi}$. The recovered support then becomes an estimation of the eigenvalues $\{\la_s\}_{s=1}^S$. 

\subsection{Related work}
The problem considered here is a special version of the line spectrum estimation problem. Many algorithms have been proposed for this important task in signal processing, dating back to Prony's method \cite{prony1795essai}. Prony's method recovers the spectrum exactly using noiseless signals, but it is known to be unstable in the presence of noise. More robust algorithms have been developed, including the matrix pencil method \cite{hua1990matrix}, MUSIC \cite{schmidt1986multiple}, ESPRIT\cite{roy1989esprit} and other subspace methods \cite{cadzow1988signal}. The modern convex relaxation approach, optimizing the $\ell_1$, total variation, and atomic norms, has been extensively developed in \cites{robust_SR,super-res,bhaskar2013atomic,tang2013compressed,tang2014near,li2015off}, to name a few.

The most active area of theoretical analysis in line spectrum estimation is super-resolution \cites{robust_SR, super-res, morgenshtern2016super, izacard2019learning, fernandez2018demixing, fernandez2016super, fernandez2013support, fernandez2013super, tang2013compressed, schiebinger2018superresolution, fannjiang2011exact, fannjiang2012super, liao2015music, liao2016music, li2019conditioning, li2020super, li2021stable, li2022stability, demanet2013super, batenkov2021super, akinshin2015accuracy, katz2022decimated, batenkov2019rethinking, batenkov2018stability, chen2013spectral, da2020compressed, li2019stable, morgenshtern2022super, donoho1992super}, where the goal is to recover the spectrum when the minimal separation is smaller than the Rayleigh distance $\mo{1/K}$. The first work on super-resolution stability was \cite{donoho1992super}, where Donoho introduced the concept of the Rayleigh index and demonstrated the connection between the Rayleigh index, super-resolution factor (SRF), and the allowed perturbation size. In \cites{robust_SR,super-res}, Candes and Fernandez-Granda demonstrated that the convex relaxation method could recover the spike locations in both noiseless and noisy situations if the spikes are separated by $2/K$. The optimal separation result  ($\approx 1/K$) was obtained by Moitra in \cite{moitra2015super}. The convex relaxation approach is shown in \cite{morgenshtern2016super} to achieve near-optimal worst-case performance. A lower bound for reconstruction error is given in \cite{akinshin2015accuracy}, and the minimax error rates for reconstruction have been obtained in \cite{batenkov2021super}. The application of MUSIC and ESPRIT in this setting have been investigated in \cites{liao2016music,li2020super}. The recent literature on super-resolution is vast, and we refer interested readers to review papers and tutorials such as \cite{chi2016tutorial}.

Among these works, several papers have discussed the recovery of positive spikes. For example, \cite{morgenshtern2016super}, and \cite{morgenshtern2022super} analyzed individual spike recovery errors in terms of the super-resolution factor under Rayleigh regularity assumptions. The result in \cite{schiebinger2018superresolution} shows that the spectrum can be exactly recovered without assuming spectral gaps if the observed signal is a noiseless superposition of certain point spread functions, and in particular, Gaussian point spread functions. The BLASSO algorithm is shown in \cite{denoyelle2015support} to be able to recover positive sources if the noise is of the order $O(\Delta^{2S-1})$, where $S$ is the number of spikes and $\Delta$ is the spectral gap.

\subsection{Comparison}
The setting of this note is somewhat different from the setup considered in the majority of works mentioned above: we do not assume any separation in the support $\Is$ or any known structure of the noise. In addition, the total mass of the residue $r(x)$ can approach $\beta$. These conditions make our problem more difficult, thus preventing the direct application of many existing algorithms based on sufficient support separation. As a result, the recovery criteria pursued in this note is somewhat weaker than the ones in the related works above: we ask the estimated support $\Ie$ to be close to $\Is$, i.e., $\max_{x\in\Ie}\dist(x, \Is)$ and $\max_{x\in\Is}\dist(x, \Ie)$ are required to be small.

We would like to point out that the conditions for \eqref{eq:fs} are crucial to the tractability of the problem considered in this note. First, it is important that the measure $f_*$ is positive. Otherwise, it would be impossible to identify nearby spikes with opposite amplitudes since they would cancel under noise and become invisible in the signal. It is also key to ensure that $w_s$ has a positive lower bound $\beta$ since small spikes are impossible to detect from the residue and the noise. Finally, rather than asking for the individual spike locations $\{x_s\}$, one can only seek an estimation of the whole support $\Is$ since it is impossible to distinguish between arbitrarily close (even positive) spikes under noise.

The rest of the paper is organized as follows. \Cref{sec:realline} considers the problem of recovering a measure on $\RR$. In \Cref{sec:periodic}, we deal with the recovery in the interval case.

\section{Real line case}\label{sec:realline}

This section considers the real line case. We consider a simple method that utilizes Gaussian smoothing and inverse Fourier transform. Denote by $\phi$ and $\hat{\phi}$ the Gaussian density function and its Fourier transform:
\begin{equation}\label{eq:gauss}
  \phi(x) = \frac{1}{\sigma/K} \exp\left(-\pi \frac{x}{(\sigma/K))^2} \right), \quad
  \hat{\phi}(k) = \exp(-\pi (k \sigma/K)^2)
\end{equation}
where $\sigma = \sqrt{\frac{1}{\pi}\log\frac{6}{\beta-\omega}}$. Let us introduce
\begin{equation}\label{eq:Iegaussreal}
  \Ie = \left\{x:\left|\int_{|k|\le K}y(k)\hat{\phi}(k)e^{2\pi\i kx}\rd k\right|>\frac{(2\beta+\omega)K}{3\sigma}\right\}.
\end{equation}
The following theorem states that $\Is$ and $\Ie$ are close.
\begin{theorem}
  Suppose $\eps\le \frac{\beta-\omega}{6}$. Then $\Is\subset \Ie$ and for any $x\in \Ie$, $\dist(x, \Is)\le \tau/K$ with $\tau = \frac{1}{\pi}\log\frac{6}{\beta-\omega}$.
\end{theorem}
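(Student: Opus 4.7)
The plan is to compare the empirical integral $g(x) := \int_{\lvert k\rvert\le K} y(k)\hat\phi(k) e^{2\pi\i k x}\rd k$ to the ideal Gaussian‑smoothed measure $G(x) := (f_*\ast \phi)(x) = \sum_{s=1}^{S} w_s\,\phi(x - x_s) + (r\ast\phi)(x)$, and to show that $G$ lies above the threshold $(2\beta+\omega)K/(3\sigma)$ on $\Is$ but below it whenever $\dist(x,\Is) > \tau/K$, with margins large enough to absorb $\lVert g - G\rVert_\infty$.

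I would first control $\lVert g - G\rVert_\infty$ by splitting into a noise piece and a Fourier‑truncation piece. The noise piece is at most $\eps \int_\RR \hat\phi(k)\rd k = \eps K/\sigma \le (\beta-\omega)K/(6\sigma)$ by the hypothesis $\eps \le (\beta-\omega)/6$. For the truncation piece, since $\lvert\hat{f}_*(k)\rvert \le f_*(\RR) = 1$, it is at most $\int_{\lvert k\rvert > K}\hat\phi(k)\rd k$; a standard Gaussian tail estimate bounds this by $(K/(\pi\sigma^2))\,e^{-\pi\sigma^2}$, and the definition $\pi\sigma^2 = \log(6/(\beta-\omega))$ makes $e^{-\pi\sigma^2} = (\beta-\omega)/6$, so after observing $\pi\sigma^2 \ge \log 6 > 1$ (hence $\pi\sigma > 1$) this piece is strictly less than $(\beta-\omega)K/(6\sigma)$. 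Summing yields $\lVert g - G\rVert_\infty < (\beta-\omega)K/(3\sigma)$.

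For the inclusion $\Is \subset \Ie$: at any spike $x_s$, nonnegativity of every term in $G$ gives $G(x_s) \ge w_s\,\phi(0) = w_s K/\sigma \ge \beta K/\sigma$, and hence $\lvert g(x_s)\rvert \ge G(x_s) - \lVert g-G\rVert_\infty > (2\beta+\omega)K/(3\sigma)$, so $x_s \in \Ie$. For the converse, fix $x$ with $\dist(x, \Is) > \tau/K$; since $\tau = \sigma^2$, every $\phi(x-x_s) \le (K/\sigma)\, e^{-\pi\tau} = (K/\sigma)(\beta-\omega)/6$, so $\sum_s w_s\phi(x-x_s) \le (K/\sigma)(\beta-\omega)/6$ via $\sum_s w_s \le 1$, and $(r\ast\phi)(x) \le \omega K/\sigma$, giving $G(x) \le (K/(6\sigma))(\beta + 5\omega)$. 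Adding the error bound then gives $\lvert g(x)\rvert \le (2\beta+\omega)K/(3\sigma)$, so $x \notin \Ie$; contrapositively, every $x\in\Ie$ satisfies $\dist(x,\Is) \le \tau/K$.

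The main technical point is calibrating the two error sources against the gap $(\beta-\omega)K/(2\sigma)$ between the spike peak $\beta K/\sigma$ and the ``far‑field'' ceiling $(\beta + 5\omega)K/(6\sigma)$; the definition of $\sigma$ is engineered precisely so that both the Fourier tail and the noise contribution fit into the same $(\beta-\omega)/6$ budget, and the stray $1/(\pi\sigma)$ factor from the Gaussian tail is absorbed since $\omega < \beta \le 1$ forces $\pi\sigma^2 \ge \log 6 > 1$.
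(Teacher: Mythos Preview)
Your proof is correct and follows essentially the same approach as the paper: the paper also compares $\int_{|k|\le K}y(k)\hat\phi(k)e^{2\pi\i kx}\rd k$ to $(\phi*f_*)(x)$, bounds the difference by the same noise-plus-truncation split (obtaining $\eps K/\sigma + (K/\sigma)e^{-\pi\sigma^2}$), and then handles the two inclusions exactly as you do. Your treatment of the Gaussian tail is slightly more explicit (you invoke the Mill's-ratio bound and verify $\pi\sigma>1$, whereas the paper simply asserts $\int_{|k|>K}|\hat\phi(k)|\rd k < (K/\sigma)e^{-\pi\sigma^2}$), but the arguments are otherwise identical.
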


\begin{proof}
Since $|y(k)-\hat{f_*}(k)|\le \eps$ for $|k|\le K$, direct calculations shows that for any $x$
\begin{equation}\label{eq:approxconvreal}
\begin{aligned}
&\left|\int_{|k|\le K}y(k)\hat{\phi}(k)e^{2\pi\i kx}\rd k-\phi*f_*(x)\right|\\
&=\left|\int_{|k|\le K}(y(k)-\hat{f_*}(k))\hat{\phi}(k)e^{2\pi\i kx}\rd k + \int_{|k|> K}\hat{f}_*(k)\hat{\phi}(k)e^{2\pi\i kx}\rd k\right|     \\
&\le \left|\int_{|k|\le K}(y(k)-\hat{f_*}(k))\hat{\phi}(k)e^{2\pi\i kx}\rd k\right| + \left|\int_{|k|> K}\hat{f}_*(k)\hat{\phi}(k)e^{2\pi\i kx}\rd k\right|\\
&\le \eps \int_{|k|\le K}|\hat{\phi}(k)|\rd k + \int_{|k|>K}|\hat{\phi}(k)|\rd k < \eps \int_{-\infty}^{\infty}\exp(-\pi (k \sigma/K)^2)\rd k + \frac{K}{\sigma}\exp(-\pi\sigma^2)\\
&\le \eps\frac{K}{\sigma} + \frac{K}{\sigma}\exp(-\pi\sigma^2).
\end{aligned}
\end{equation}
\textbf{Step 1.} We first show that $\Is\subset \Ie$ with the help of \eqref{eq:approxconvreal}.
For any $x_s\in\Is$, 
\[
\begin{aligned}
\left|\int_{|k|\le K}y(k)\hat{\phi}(k)e^{2\pi\i kx_s}\rd k\right|&\ge (\phi*f_*)(x_s) - \left|\int_{|k|\le K}y(k)\hat{\phi}(k)e^{2\pi\i kx_s}\rd k-\phi*f_*(x_s)\right|\\
&\ge \frac{K}{\sigma}\beta - \eps\frac{K}{\sigma} - \frac{K}{\sigma}\exp(-\pi\sigma^2) \ge \frac{K}{\sigma}\left(\beta-\frac{\beta-\omega}{6}-\frac{\beta-\omega}{6}\right)\\
&=\frac{K}{\sigma}\left(\frac{2\beta+\omega}{3}\right).
\end{aligned}
\]
Thus $x_s\in \Ie$ by definition \eqref{eq:Iegaussreal}. 

\textbf{Step 2.} Now let us show that for any $x\in \Ie$, $\dist(x, \Is)\le \tau/K$. Using proof by contradiction, let us assume that there exists $x\in \Ie$ that violates this. Then
\begin{equation}\label{eq:step2gaussreal}
\begin{aligned}
\frac{(2\beta+\omega)K}{3\sigma}&\le \left|\int_{|k|\le K}y(k)\hat{\phi}(k)e^{2\pi\i kx}\rd k\right|\\
&\le (\phi*f_*)(x)+ \left|\sum_{|k|\le K}y(k)\hat{\phi}(k)e^{2\pi\i kx}-\phi*f_*(x)\right|\\
&<\frac{K}{\sigma}\exp(-\pi\frac{\tau^2}{\sigma^2}) + \frac{K}{\sigma}\omega + \eps\frac{K}{\sigma} + \frac{K}{\sigma}\exp(-\pi\sigma^2)\\
&\le \frac{K}{\sigma}\left(\frac{\beta-\omega}{6} + \omega + \frac{\beta-\omega}{6} + \frac{\beta-\omega}{6}\right)
= \frac{(\beta+\omega)K}{2\sigma},
\end{aligned}
\end{equation}
which leads to $\frac{\beta+\omega}{2} > \frac{2\beta+\omega}{3} $ and contradicts with $\beta>\omega$. Here we have used \eqref{eq:approxconvreal} in the third line of \eqref{eq:step2gaussreal}.
\end{proof}

\section{Periodic interval case}\label{sec:periodic}

This section considers the case of the periodic interval $\TT$. Introduce the periodic Gaussian function
\begin{equation}\label{eq:pgauss}
  \phi_p(x) = \sum_{j\in\ZZ}\phi(x+j)=\sum_{j\in\ZZ} \frac{1}{\sigma/K} \exp\left(-\pi \frac{(x+j)^2}{(\sigma/K)^2} \right),\quad x\in\TT.
\end{equation}
Notice that $\int_0^1\phi_p(x)\rd x = 1$. Its Fourier coefficients are given by
\begin{equation}\label{eq:pgausshat}
  \hat{\phi}_p(k) = \exp(-\pi (k \sigma/K)^2),\quad k\in\ZZ. 
\end{equation}
The following lemma bounds $\phi_p$ in terms of $\phi$ (defined in \eqref{eq:gauss}).

\renewcommand{\kappa}{10^{-4}}
\begin{lemma}\label{lem:gaussian}
    If $K\ge 3\sigma$, then $\phi_p(x)$ is increasing on $[-\half,0]$ and decreasing on $[0,\half]$. For $x\in [-\frac{1}{3},\frac{1}{3}]$,
    \begin{equation}
        \phi(x) \le \phi_p(x) \le (1+\kappa) \phi(x),\label{ineq:gauss bound}.
    \end{equation}    
    In particular, 
    \begin{equation}
        \sum_{k\in\ZZ}\hat{\phi}_p(k) = \phi_p(0) \le (1+\kappa) \phi(0) = \phi_p(0) \le (1+\kappa)\frac{K}{\sigma}.\label{ineq:p0bound}
    \end{equation}
\end{lemma}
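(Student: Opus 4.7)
The lemma breaks into three assertions, which I would prove in turn using elementary Gaussian estimates.

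For the monotonicity, I would differentiate termwise to obtain $\phi_p'(x) = -2\pi(K/\sigma)^2\sum_{j\in\ZZ}(x+j)\phi(x+j)$ and reduce to proving $\phi_p'(x)<0$ on $(0,\tfrac12)$ using the symmetry $\phi_p(-x)=\phi_p(x)$ inherited from evenness of $\phi$. Introducing $g(y)=y\phi(y)$, one checks $g'(y)=\phi(y)(1-2\pi(K/\sigma)^2 y^2)$, so $g$ is strictly decreasing on $|y|>\sigma/(K\sqrt{2\pi})$, a threshold that $K\ge 3\sigma$ forces below $\tfrac12$. Regrouping the sum as $\sum_{j\ge 0}[g(x+j)-g(j+1-x)]$ makes every $j\ge 1$ contribution positive, since both arguments lie past the peak of $g$ and satisfy $x+j<j+1-x$. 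Combining this with Taylor expansions around the two zeros $\phi_p'(0)=0=\phi_p'(\tfrac12)$ (using $\phi_p''(0)<0$, dominated by the central Gaussian, and $\phi_p''(\tfrac12)>0$, with each summand past the peak) shows $\phi_p'<0$ throughout the interior. A brief check that $\phi_p''$ has exactly one sign change on $(0,\tfrac12)$ rules out a second interior zero of $\phi_p'$.

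For the sandwich $\phi\le\phi_p\le(1+\kappa)\phi$ on $[-\tfrac13,\tfrac13]$, the lower bound is immediate from nonnegativity of $\phi$. For the upper bound, compute $\phi(x+j)/\phi(x)=\exp(-\pi(j^2+2jx)(K/\sigma)^2)$; by evenness I may assume $x\in[0,\tfrac13]$. Using $(K/\sigma)^2\ge 9$, the dominant $j=-1$ contribution is bounded by $\exp(-9\pi(1-2x))$, which is convex in $x$ and maximized at $x=\tfrac13$ with value $e^{-3\pi}\approx 8.12\times 10^{-5}$. The $j=+1$ term is at most $e^{-9\pi}$ and the $|j|\ge 2$ contributions form a rapidly decaying tail starting from $e^{-24\pi}$; summing keeps the total strictly under $\kappa$.

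For the final identity, evaluating the Fourier series $\phi_p(x)=\sum_{k\in\ZZ} \hat{\phi}_p(k)e^{2\pi\i kx}$ at $x=0$ gives $\phi_p(0)=\sum_{k\in\ZZ} \hat{\phi}_p(k)$, and the sandwich at $x=0$ yields $\phi_p(0)\le(1+\kappa)\phi(0)=(1+\kappa)K/\sigma$ by the definition of $\phi$ in \eqref{eq:gauss}. The main obstacle is the monotonicity claim: the pairwise grouping does not by itself produce a signed contribution throughout $(0,\tfrac12)$, so one must combine it with a local Taylor analysis at the two boundary zeros of $\phi_p'$ and with a count of sign changes of $\phi_p''$. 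The sharp constant $\kappa$ in the sandwich also needs care, since bounding the $j=1$ and $j=-1$ terms independently by their suprema would overshoot; the convexity of $\exp(-9\pi(1-2x))+\exp(-9\pi(1+2x))$ ensures that only one extremum is attained at any fixed $x$.
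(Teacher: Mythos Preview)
Your treatment of the sandwich inequality $\phi\le\phi_p\le(1+\kappa)\phi$ and of the identity $\sum_k\hat\phi_p(k)=\phi_p(0)$ is essentially the paper's: both compute $\phi_p(x)/\phi(x)=\sum_j\exp\bigl(-\pi(K/\sigma)^2(j^2+2xj)\bigr)$, use $(K/\sigma)^2\ge 9$, isolate the $j=-1$ term (maximized at $x=\tfrac13$ with value $e^{-3\pi}$), and bound the remaining terms by geometric tails.

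For the monotonicity, however, the paper takes a much shorter and different route. It identifies $\phi_p$ with the Jacobi theta function $\Theta(x\mid \i\sigma^2/K^2)$ and invokes the product representation
\[
\phi_p(x)=\prod_{n=1}^{\infty}(1-q^{2n})\bigl(1+2q^{2n-1}\cos(2\pi x)+q^{4n-2}\bigr),\qquad q=e^{-\pi\sigma^2/K^2},
\]
observing that every factor is positive and decreasing in $x$ on $[0,\tfrac12]$. This disposes of the monotonicity in one line once the product formula is quoted.

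Your differentiation approach avoids the theta machinery, which is a genuine gain in elementarity, but the step you call a ``brief check''---that $\phi_p''$ has exactly one sign change on $(0,\tfrac12)$---is not brief. It asserts that the periodized Gaussian has a single inflection point per half-period, a claim of comparable depth to the monotonicity itself. Without it your argument is incomplete: knowing that $\phi_p'(0)=\phi_p'(\tfrac12)=0$, that $\phi_p'<0$ near both endpoints, and that the $j\ge 1$ pairs in your regrouping are positive does not by itself exclude an interior zero of $\phi_p'$. A cleaner repair is to show directly that the $j=0$ pair $g(x)-g(1-x)$ is nonnegative on $(0,\tfrac12)$ except on an interval of length $O(e^{-9\pi})$ near $0$, where its deficit is bounded by $g(1)$ and is in turn absorbed by the $j=1$ pair $g(x+1)-g(2-x)\approx g(1)$; but that estimate needs to be written out, not deferred.
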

\begin{proof}
  Without loss of generality, we assume $x\ge 0$ since $\phi_p$ is an even function. Notice that the Jacobi theta function $\Theta(x \mid \i\sigma^2/K^2)=\sum_{k\in\ZZ}\exp(-\pi(k\sigma/K)^2+2\pi\i kx)$ is just the Fourier series of $\phi_p(x)$, and we can use its product form
  \begin{equation*}
    \phi_p(x)=\Theta(x \mid \i\sigma^2/K^2)=\prod_{n=1}^{\infty}\left(1-q^{2 n}\right)\left(1+2 q^{2 n-1} \cos (2\pi x)+q^{4 n-2}\right),
  \end{equation*}
  where $q = e^{-\pi\sigma^2/K^2}$. Therefore, $\phi_p(x)$ is decreasing on $[0,\half]$ since each product term is positive and decreasing in $x$.
  
  When $x\in[0,\frac{1}{3}]$, $\phi(x) \le \phi_p(x)$ is trivial from the definition of $\phi_p$. The other direction of \eqref{ineq:gauss bound} can be established using the following calculation.
  \begin{equation*}
    \begin{aligned}
      \frac{\phi_p(x)}{\phi(x)} &= \sum_{j\in\ZZ} \frac{\phi(x+j)}{\phi(x)} = \sum_{j\in\ZZ} \exp(-\pi \frac{K^2}{\sigma^2}(j^2+2xj))\le \sum_{j\in\ZZ} \exp(-9\pi(j^2+2xj))\\
      & = 1+\exp(-9\pi(1-2x))+\sum_{j=1}^{\infty} \exp(-9\pi(j^2+2xj))+\sum_{j=2}^{\infty} \exp(-9\pi(j^2-2xj))\\
      &\le 1+\exp(-9\pi(1-2/3))+\sum_{j=1}^{\infty} \exp(-9\pi j)+\sum_{j=2}^{\infty} \exp(-9\pi j)\\
      &\le 1+10^{-4},
    \end{aligned}
  \end{equation*}
  which completes the proof.
\end{proof}

Let us introduce
\begin{equation}\label{eq:Iegauss}
    \Ie = \left\{x:\left|\sum_{|k|\le K}y(k)\hat{\phi}_p(k)e^{2\pi\i kx}\right|>\frac{6\beta+5\omega}{11}\phi_p(0)\right\},
\end{equation}
where $\sigma = \sqrt{\frac{1}{\pi}\log\frac{12}{\beta-\omega}}$. The following theorem states that $\Is$ and $\Ie$ are close in the periodic case.

\begin{theorem}
  Suppose $\omega<\beta$ and $\eps\le \frac{\beta-\omega}{3}$. Then $\Is\subset \Ie$ and for any $x\in \Ie$, $\dist(x, \Is)\le \tau/K$ with $\tau = \frac{1}{\pi}\log\frac{12}{\beta-\omega}$ for any $K\ge 3\tau$.
\end{theorem}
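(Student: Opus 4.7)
The plan is to mirror the two-step structure of the proof of the real-line theorem, replacing the Gaussian $\phi$ with the periodized Gaussian $\phi_p$ and using Lemma~\ref{lem:gaussian} to control the periodization error. First I would establish the analogue of \eqref{eq:approxconvreal}: for every $x\in\TT$,
\[
\left|\sum_{|k|\le K}y(k)\hat{\phi}_p(k)e^{2\pi\i kx}-(\phi_p*f_*)(x)\right|\le\eps\sum_{|k|\le K}\hat{\phi}_p(k)+\sum_{|k|>K}\hat{\phi}_p(k).
\]
The first term is bounded by $\eps\phi_p(0)$ via \eqref{ineq:p0bound}. The tail $\sum_{|k|>K}\hat\phi_p(k)=\sum_{|k|>K}\exp(-\pi(k\sigma/K)^2)$ is controlled by an integral comparison; since $\exp(-\pi\sigma^2)=(\beta-\omega)/12$ and the hypothesis $K\ge 3\tau=3\sigma^2$ makes $\sigma$ large, this tail comes out as a small multiple of $(\beta-\omega)\phi_p(0)$, which will be absorbed by the slack in the threshold.

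Step~1 (showing $\Is\subset\Ie$) is then almost identical to the real-line argument: for $x_s\in\Is$, positivity of $r$ and of each spike gives $(\phi_p*f_*)(x_s)\ge w_s\phi_p(0)\ge\beta\phi_p(0)$, so combining with the approximation bound and $\eps\le(\beta-\omega)/3$ yields
\[
\left|\sum_{|k|\le K}y(k)\hat{\phi}_p(k)e^{2\pi\i kx_s}\right|\ge\Bigl(\beta-\tfrac{\beta-\omega}{3}\Bigr)\phi_p(0)-\text{tail}=\tfrac{2\beta+\omega}{3}\phi_p(0)-\text{tail},
\]
which exceeds $\tfrac{6\beta+5\omega}{11}\phi_p(0)$ since $\tfrac{2\beta+\omega}{3}>\tfrac{6\beta+5\omega}{11}$ whenever $\beta>\omega$, and the tail is small enough to fit inside this gap.

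Step~2 (every $x\in\Ie$ satisfies $\dist(x,\Is)\le\tau/K$) is where the periodic structure really matters. Assuming for contradiction that $\dist(x,\Is)>\tau/K$, I would decompose $(\phi_p*f_*)(x)=\sum_s w_s\phi_p(x-x_s)+(\phi_p*r)(x)$. Because Lemma~\ref{lem:gaussian} asserts that $\phi_p$ is unimodal on $\TT$, each periodic distance $|x-x_s|$ exceeds $\tau/K$, so $\phi_p(x-x_s)\le\phi_p(\tau/K)$. Together with $\sum_s w_s\le 1$ and $r(\TT)\le\omega$ this gives $(\phi_p*f_*)(x)\le\phi_p(\tau/K)+\omega\phi_p(0)$. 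The choice $\tau=\sigma^2$ is precisely what makes $\phi(\tau/K)=(K/\sigma)e^{-\pi\sigma^2}=(K/\sigma)(\beta-\omega)/12$; combining with \eqref{ineq:gauss bound} gives $\phi_p(\tau/K)/\phi_p(0)\le(1+\kappa)(\beta-\omega)/12$. Adding the noise contribution $\eps\phi_p(0)$ and the truncation tail then produces an upper bound strictly below $\tfrac{6\beta+5\omega}{11}\phi_p(0)$, contradicting $x\in\Ie$.

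The main obstacle is bookkeeping rather than any deep idea: three separate slack terms (the factor $(1+\kappa)$ from Lemma~\ref{lem:gaussian}, the Fourier truncation tail, and the noise $\eps\phi_p(0)$) must all be absorbed by a single threshold, and the slightly awkward constant $\tfrac{6\beta+5\omega}{11}$ is chosen so that it lies strictly between the Step~1 lower bound $\tfrac{2\beta+\omega}{3}$ and the Step~2 upper bound $\tfrac{\beta+11\omega}{12}+O(\beta-\omega)$. Verifying these two strict inequalities reduces to the single hypothesis $\omega<\beta$, and the condition $K\ge 3\tau$ ensures the tail and periodization slack are small enough to fit inside the resulting margins.
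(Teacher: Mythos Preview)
Your proposal is correct and follows essentially the same route as the paper's own proof: the same approximation inequality $\bigl|\sum_{|k|\le K}y(k)\hat\phi_p(k)e^{2\pi\i kx}-(\phi_p*f_*)(x)\bigr|<(\eps+e^{-\pi\sigma^2})\phi_p(0)$, then the identical two steps using $(\phi_p*f_*)(x_s)\ge\beta\phi_p(0)$ for Step~1 and the unimodality/bound from Lemma~\ref{lem:gaussian} together with $\tau=\sigma^2$ for Step~2. One small slip: the hypothesis $K\ge 3\tau$ does not ``make $\sigma$ large''; $\sigma$ is fixed by $\beta-\omega$, and the tail bound $\sum_{|k|>K}\hat\phi_p(k)\le(K/\sigma)e^{-\pi\sigma^2}\le\phi_p(0)\cdot\tfrac{\beta-\omega}{12}$ comes directly from the choice of $\sigma$, while $K\ge 3\tau$ is used only to ensure $\tau/K\le\tfrac13$ so that Lemma~\ref{lem:gaussian} applies at the point $\tau/K$.
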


\begin{proof}
Since $|y(k)-\hat{f_*}(k)|\le \eps$ for $|k|\le K$, direct calculations show that for any $x$
\begin{equation}\label{eq:approxconv}
\begin{aligned}
&\left|\sum_{|k|\le K}y(k)\hat{\phi}_p(k)e^{2\pi\i kx}-(\phi_p*f_*)(x)\right|\\
&= \left|\sum_{|k|\le K}(y(k)-\hat{f_*}(k))\hat{\phi}_p(k)e^{2\pi\i kx} + \sum_{|k|> K}\hat{f}_*(k)\hat{\phi}_p(k)e^{2\pi\i kx}\right|     \\
&\le \left|\sum_{|k|\le K}(y(k)-\hat{f_*}(k))\hat{\phi}_p(k)e^{2\pi\i kx}\right| + \left|\sum_{|k|> K}\hat{f}_*(k)\hat{\phi}_p(k)e^{2\pi\i kx}\right|\\
&\le \eps \sum_{|k|\le K}|\hat{\phi}_p(k)| + \sum_{|k|>K}|\hat{\phi}_p(k)| < \eps \sum_{k\in\ZZ}\hat{\phi}_p(k) + \int_{|k|>K}\exp(-\pi (k \sigma/K)^2)\rd k\\
&\le \eps\phi_p(0) + \frac{K}{\sigma}\exp(-\pi\sigma^2)<(\eps + \exp(-\pi\sigma^2))\phi_p(0).
\end{aligned}
\end{equation}
where the last step uses \eqref{ineq:p0bound}.

\textbf{Step 1.} We first show that $\Is\subset \Ie$ with the help of \eqref{eq:approxconv}.
For any $x_s\in\Is$, 
\[
\begin{aligned}
\left|\sum_{|k|\le K}y(k)\hat{\phi}_p(k)e^{2\pi\i kx_s}\right|&\ge (\phi_p*f_*)(x_s)-\left|\sum_{|k|\le K}y(k)\hat{\phi}_p(k)e^{2\pi\i kx_s}-(\phi_p*f_*)(x_s)\right|\\
&\ge \beta\phi_p(0) - (\eps + \exp(-\pi\sigma^2))\phi_p(0)\\
&\ge \left(\beta-\frac{\beta-\omega}{3}-\frac{\beta-\omega}{12}\right)\phi_p(0)
>\left(\frac{6\beta+5\omega}{11}\right)\phi_p(0),
\end{aligned}
\]
thus $x_s\in \Ie$ by definition \eqref{eq:Iegauss}. 

\textbf{Step 2.} Now we show that for any $x\in \Ie$, $\dist(x, \Is)\le \tau/K$. We proceed with proof by contradiction. Assume that there exists $x\in \Ie$ violating this. Together with $\tau/K\le\frac{1}{3}$ and \Cref{lem:gaussian}, we have
\begin{equation*}
\begin{aligned}
    (\phi_p*f_*)(x)&=\sum_{s=1}^S w_s\phi_p(x-x_s)+(\phi_p*r)(x)\le \phi_p(\tau/K)+\omega\phi_p(0)\\
    &\le (1+\kappa)\frac{K}{\sigma}\exp(-\pi\frac{\tau^2}{\sigma^2}) + \omega\phi_p(0)\le (1+\kappa)\phi_p(0)\exp(-\pi\frac{\tau^2}{\sigma^2}) + \omega\phi_p(0).
\end{aligned}
\end{equation*}
Therefore
\begin{equation}\label{eq:step2gauss}
\begin{aligned}
  \frac{6\beta+5\omega}{11}\phi_p(0)&\le \left|\sum_{|k|\le K}y(k)\hat{\phi}_p(k)e^{2\pi\i kx}\right| \le (\phi_p*f_*)(x)+ \left|\sum_{|k|\le K}(k)\hat{\phi}_p(k)e^{2\pi\i kx}-(\phi_P*f_*)(x)\right|\\
&<(1+\kappa)\phi_p(0)\exp(-\pi\frac{\tau^2}{\sigma^2}) + \omega\phi_p(0) + (\eps + \exp(-\pi\sigma^2))\phi_p(0)\\
&\le \left((1+\kappa)\frac{\beta-\omega}{12} + \omega + \frac{\beta-\omega}{3} + \frac{\beta-\omega}{12}\right)\phi_p(0)\\
&<\frac{6\beta+5\omega}{11}\phi_p(0),
\end{aligned}
\end{equation}
which is a contradiction.
\end{proof}

\bibliographystyle{abbrv}

\bibliography{ref}

@inproceedings{moitra2015super,
  title={Super-resolution, extremal functions and the condition number of Vandermonde matrices},
  author={Moitra, Ankur},
  booktitle={Proceedings of the forty-seventh annual ACM symposium on Theory of computing},
  pages={821--830},
  year={2015}
}

@misc{chi2016tutorial,
url = {https://sigport.org/documents/tutorial-slides-convex-optimization-techniques-super-resolution-parameter-estimation},
author = {Yuejie Chi; Gongguo Tang },
publisher = {IEEE Signal Processing Society SigPort},
title = {Tutorial Slides for Convex Optimization Techniques for Super-Resolution Parameter Estimation},
year = {2016} }

@article{denoyelle2015support,
  title={Support recovery for sparse deconvolution of positive measures},
  author={Denoyelle, Quentin and Duval, Vincent and Peyr{\'e}, Gabriel},
  journal={arXiv preprint arXiv:1506.08264},
  year={2015}
}

@article{cadzow1988signal,
  title={Signal enhancement-a composite property mapping algorithm},
  author={Cadzow, James A},
  journal={IEEE Transactions on Acoustics, Speech, and Signal Processing},
  volume={36},
  number={1},
  pages={49--62},
  year={1988},
  publisher={IEEE}
}

@article{roy1989esprit,
  title={ESPRIT-estimation of signal parameters via rotational invariance techniques},
  author={Roy, Richard and Kailath, Thomas},
  journal={IEEE Transactions on acoustics, speech, and signal processing},
  volume={37},
  number={7},
  pages={984--995},
  year={1989},
  publisher={IEEE}
}

@article{schmidt1986multiple,
  title={Multiple emitter location and signal parameter estimation},
  author={Schmidt, Ralph},
  journal={IEEE transactions on antennas and propagation},
  volume={34},
  number={3},
  pages={276--280},
  year={1986},
  publisher={IEEE}
}

@article{hua1990matrix,
  title={Matrix pencil method for estimating parameters of exponentially damped/undamped sinusoids in noise},
  author={Hua, Yingbo and Sarkar, Tapan K},
  journal={IEEE Transactions on Acoustics, Speech, and Signal Processing},
  volume={38},
  number={5},
  pages={814--824},
  year={1990},
  publisher={IEEE}
}

@article{prony1795essai,
  title={Essai experimental--,-},
  author={Prony, R},
  journal={J. de l’Ecole Polytechnique},
  volume={2},
  pages={929},
  year={1795}
}

@article{batenkov2018stability,
  title={Stability and super-resolution of generalized spike recovery},
  author={Batenkov, Dmitry},
  journal={Applied and Computational Harmonic Analysis},
  volume={45},
  number={2},
  pages={299--323},
  year={2018},
  publisher={Elsevier}
}

@inproceedings{batenkov2019rethinking,
  title={Rethinking super-resolution: the bandwidth selection problem},
  author={Batenkov, Dmitry and Bhandari, Ayush and Blu, Thierry},
  booktitle={ICASSP 2019-2019 IEEE International Conference on Acoustics, Speech and Signal Processing (ICASSP)},
  pages={5087--5091},
  year={2019},
  organization={IEEE}
}

@article{katz2022decimated,
  title={Decimated Prony's Method for Stable Super-resolution},
  author={Katz, Rami and Diab, Nuha and Batenkov, Dmitry},
  journal={arXiv preprint arXiv:2210.13329},
  year={2022}
}

@article{li2019stable,
  title={Stable separation and super-resolution of mixture models},
  author={Li, Yuanxin and Chi, Yuejie},
  journal={Applied and Computational Harmonic Analysis},
  volume={46},
  number={1},
  pages={1--39},
  year={2019},
  publisher={Elsevier}
}

@article{da2020compressed,
  title={Compressed super-resolution of positive sources},
  author={Da Costa, Maxime Ferreira and Chi, Yuejie},
  journal={IEEE Signal Processing Letters},
  volume={28},
  pages={56--60},
  year={2020},
  publisher={IEEE}
}

@article{li2015off,
  title={Off-the-grid line spectrum denoising and estimation with multiple measurement vectors},
  author={Li, Yuanxin and Chi, Yuejie},
  journal={IEEE Transactions on Signal Processing},
  volume={64},
  number={5},
  pages={1257--1269},
  year={2015},
  publisher={IEEE}
}

@inproceedings{chen2013spectral,
  title={Spectral compressed sensing via structured matrix completion},
  author={Chen, Yuxin and Chi, Yuejie},
  booktitle={International Conference on Machine Learning},
  pages={414--422},
  year={2013},
  organization={PMLR}
}

@article{donoho1992super,
author = {Donoho, David L.},
title = {Superresolution via Sparsity Constraints},
journal = {SIAM Journal on Mathematical Analysis},
volume = {23},
number = {5},
pages = {1309-1331},
year = {1992},
doi = {10.1137/0523074},
URL = {https://doi.org/10.1137/0523074},
eprint = {https://doi.org/10.1137/0523074},
}

@article{schiebinger2018superresolution,
  title={Superresolution without separation},
  author={Schiebinger, Geoffrey and Robeva, Elina and Recht, Benjamin},
  journal={Information and Inference: A Journal of the IMA},
  volume={7},
  number={1},
  pages={1--30},
  year={2018},
  publisher={Oxford University Press}
}

@article{tang2014near,
  title={Near minimax line spectral estimation},
  author={Tang, Gongguo and Bhaskar, Badri Narayan and Recht, Benjamin},
  journal={IEEE Transactions on Information Theory},
  volume={61},
  number={1},
  pages={499--512},
  year={2014},
  publisher={IEEE}
}

@article{bhaskar2013atomic,
  title={Atomic norm denoising with applications to line spectral estimation},
  author={Bhaskar, Badri Narayan and Tang, Gongguo and Recht, Benjamin},
  journal={IEEE Transactions on Signal Processing},
  volume={61},
  number={23},
  pages={5987--5999},
  year={2013},
  publisher={IEEE}
}

@article{tang2013compressed,
  title={Compressed sensing off the grid},
  author={Tang, Gongguo and Bhaskar, Badri Narayan and Shah, Parikshit and Recht, Benjamin},
  journal={IEEE transactions on information theory},
  volume={59},
  number={11},
  pages={7465--7490},
  year={2013},
  publisher={IEEE}
}

@article{fannjiang2011exact,
  title={Exact localization and superresolution with noisy data and random illumination},
  author={Fannjiang, Albert C},
  journal={Inverse Problems},
  volume={27},
  number={6},
  pages={065012},
  year={2011},
  publisher={IOP Publishing}
}

@inproceedings{fernandez2013super,
  title={Super-resolution via transform-invariant group-sparse regularization},
  author={Fernandez-Granda, Carlos and Candes, Emmanuel J},
  booktitle={Proceedings of the IEEE international conference on computer vision},
  pages={3336--3343},
  year={2013}
}

@article{fernandez2013support,
  title={Support detection in super-resolution},
  author={Fernandez-Granda, Carlos},
  journal={arXiv preprint arXiv:1302.3921},
  year={2013}
}

@article{fernandez2016super,
  title={Super-resolution of point sources via convex programming},
  author={Fernandez-Granda, Carlos},
  journal={Information and Inference: A Journal of the IMA},
  volume={5},
  number={3},
  pages={251--303},
  year={2016},
  publisher={Oxford University Press}
}

@article{fernandez2018demixing,
  title={Demixing sines and spikes: Robust spectral super-resolution in the presence of outliers},
  author={Fernandez-Granda, Carlos and Tang, Gongguo and Wang, Xiaodong and Zheng, Le},
  journal={Information and Inference: A Journal of the IMA},
  volume={7},
  number={1},
  pages={105--168},
  year={2018},
  publisher={Oxford University Press}
}

@inproceedings{izacard2019learning,
  title={A learning-based framework for line-spectra super-resolution},
  author={Izacard, Gautier and Bernstein, Brett and Fernandez-Granda, Carlos},
  booktitle={ICASSP 2019-2019 IEEE International Conference on Acoustics, Speech and Signal Processing (ICASSP)},
  pages={3632--3636},
  year={2019},
  organization={IEEE}
}

@inproceedings{akinshin2015accuracy,
  title={Accuracy of spike-train Fourier reconstruction for colliding nodes},
  author={Akinshin, Andrey and Batenkov, Dmitry and Yomdin, Yosef},
  booktitle={2015 International Conference on Sampling Theory and Applications (SampTA)},
  pages={617--621},
  year={2015},
  organization={IEEE}
}

@article{batenkov2021super,
  title={Super-resolution of near-colliding point sources},
  author={Batenkov, Dmitry and Goldman, Gil and Yomdin, Yosef},
  journal={Information and Inference: A Journal of the IMA},
  volume={10},
  number={2},
  pages={515--572},
  year={2021},
  publisher={Oxford University Press}
}

@article{demanet2013super,
  title={Super-resolution via superset selection and pruning},
  author={Demanet, Laurent and Needell, Deanna and Nguyen, Nam},
  journal={arXiv preprint arXiv:1302.6288},
  year={2013}
}

@article{morgenshtern2016super,
  title={Super-resolution of positive sources: The discrete setup},
  author={Morgenshtern, Veniamin I and Candes, Emmanuel J},
  journal={SIAM Journal on Imaging Sciences},
  volume={9},
  number={1},
  pages={412--444},
  year={2016},
  publisher={SIAM}
}

@article{morgenshtern2022super,
  title={Super-resolution of positive sources on an arbitrarily fine grid},
  author={Morgenshtern, Veniamin I},
  journal={Journal of Fourier Analysis and Applications},
  volume={28},
  number={1},
  pages={4},
  year={2022},
  publisher={Springer}
}

@article{li2022stability,
  title={Stability and super-resolution of MUSIC and ESPRIT for multi-snapshot spectral estimation},
  author={Li, Weilin and Zhu, Zengying and Gao, Weiguo and Liao, Wenjing},
  journal={IEEE Transactions on Signal Processing},
  volume={70},
  pages={4555--4570},
  year={2022},
  publisher={IEEE}
}

@article{li2021stable,
  title={Stable super-resolution limit and smallest singular value of restricted Fourier matrices},
  author={Li, Weilin and Liao, Wenjing},
  journal={Applied and Computational Harmonic Analysis},
  volume={51},
  pages={118--156},
  year={2021},
  publisher={Elsevier}
}

@article{li2020super,
  title={Super-resolution limit of the ESPRIT algorithm},
  author={Li, Weilin and Liao, Wenjing and Fannjiang, Albert},
  journal={IEEE transactions on information theory},
  volume={66},
  number={7},
  pages={4593--4608},
  year={2020},
  publisher={IEEE}
}

@inproceedings{li2019conditioning,
  title={Conditioning of restricted Fourier matrices and super-resolution of MUSIC},
  author={Li, Weilin and Liao, Wenjing},
  booktitle={2019 13th International conference on Sampling Theory and Applications (SampTA)},
  pages={1--4},
  year={2019},
  organization={IEEE}
}

@inproceedings{fannjiang2012super,
  title={Super-resolution by compressive sensing algorithms},
  author={Fannjiang, Albert and Liao, Wenjing},
  booktitle={2012 Conference Record of the Forty Sixth Asilomar Conference on Signals, Systems and Computers (ASILOMAR)},
  pages={411--415},
  year={2012},
  organization={IEEE}
}

@article{liao2016music,
  title={MUSIC for single-snapshot spectral estimation: Stability and super-resolution},
  author={Liao, Wenjing and Fannjiang, Albert},
  journal={Applied and Computational Harmonic Analysis},
  volume={40},
  number={1},
  pages={33--67},
  year={2016},
  publisher={Elsevier}
}

@article{liao2015music,
  title={MUSIC for multidimensional spectral estimation: Stability and super-resolution},
  author={Liao, Wenjing},
  journal={IEEE transactions on signal processing},
  volume={63},
  number={23},
  pages={6395--6406},
  year={2015},
  publisher={IEEE}
}

@article{robust_SR,
  author = {Cand{è}s, Emmanuel J and Fernandez-Granda, Carlos},
  title = {Super-resolution from noisy data},
  journaltitle = {Journal of Fourier Analysis and Applications},
  publisher = {Springer},
  volume = {19},
  number = {6},
  pages = {1229--1254},
  date = {2013},
  annotation = {2012i},
}

@article{super-res,
  author = {Cand{è}s, Emmanuel J and Fernandez-Granda, Carlos},
  title = {Towards a mathematical theory of super-resolution},
  journaltitle = {Communications on Pure and Applied Mathematics},
  volume = {67},
  number = {6},
  pages = {906--956},
  date = {2014},
  annotation = {2012c},
}

@article{o2019quantum,
  title={Quantum phase estimation of multiple eigenvalues for small-scale (noisy) experiments},
  author={O’Brien, Thomas E and Tarasinski, Brian and Terhal, Barbara M},
  journal={New Journal of Physics},
  volume={21},
  number={2},
  pages={023022},
  year={2019},
  publisher={IOP Publishing}
}

@article{somma2019quantum,
  title={Quantum eigenvalue estimation via time series analysis},
  author={Somma, Rolando D},
  journal={New Journal of Physics},
  volume={21},
  number={12},
  pages={123025},
  year={2019},
  publisher={IOP Publishing}
}

\end{document}